\newtheorem{thm}{Theorem}[section]
\newtheorem{lem}[thm]{Lemma}
\newtheorem{defn}[thm]{Definition}
\newtheorem{assum}{Assumption}[section]
\newcommand{\cG}{\mathcal{G}}
\newcommand{\cE}{\mathcal{E}}
\newcommand{\cW}{\mathcal{W}}
\newcommand{\cV}{\mathcal{V}}
\newcommand{\diag}{{\rm diag}}
\newcommand{\sgn}{{\rm sgn}}
\newcommand{\argmin}{{\rm argmin}}
\newcommand{\R}{\mathbb{R}}
\newcommand{\N}{\mathbb{N}}
\begin{document}
\title{Event-triggered bipartite consensus for multiagent system with general linear dynamics: an integral-type event triggered control}
\author{Nhan-Phu Chung, Thanh-Son Trinh, and  Woocheol Choi
\thanks{  All authors were supported by the National Research Foundation of Korea (NRF) grants funded by the Korea government No. NRF- 2016R1A5A1008055. N.P. Chung was supported by the National Research Foundation of Korea  (NRF) grant funded by the Korea government No. NRF-2019R1C1C1007107. W. Choi was supported by the National Research Foundation
of Korea(NRF) grant funded by the Korea government  No. NRF-
2021R1F1A1059671. }
\thanks{N. P. Chung, T. S. Trinh, and W. Choi  are with the Department of Mathematics, Sungkyunkwan University, Suwon 16419 (e-mail: phuchung@skku.edu; phuchung82@gmail.com; sontrinh@skku.edu; choiwc@skku.edu).}}
\maketitle

\begin{abstract}
In this paper, we propose an integral based event-triggering controller for bipartite consensus of the multi-agent systems whose dynamics are described by general linear system. We prove that the system achieves the bipartite consensus in asymptotic regime and there is a positive minimum inter-event time (MIET) between two consecutive triggering times of each agent. The proof of the asymptotive stability involves a novel argument used to bound the norm of the difference between the true state and its estimated state (for each time) by an integration of its square. 
 Numerical results are provided supporting the effectiveness of the proposed controller.

\end{abstract}

\begin{IEEEkeywords}
bipartite consensus, chattering and genuinely Zeno, integral-based event triggered control, multi-agent systems with linear dynamics. 
\end{IEEEkeywords}

\section{Introduction}
The coordinated control of multi-agent system (MASs) has recieved a lot of interest in the last decades, thanks to its wide applications in various fields containing engineering and computer science, biology, and social sciences. In particular, distributed controls for the coordinates of multi-agent systems (MASs) over graphs have been studied intensively by variety of researchers \cite{A,CYRC,KCM,OM,QGY,QMSW,YCCH, ZFHC}. A fundamental problem in the coordinated control of multi-agent system is the consensus problem which requires all agents to asymptotically achieve a common quantity during their cooperative interactions. The main task of the consensus problem in distributed multi-agent systems is to design
the control so that all agents will obtain agreement using only interactions of neighbors.

 In certain practical problems, some agents collaborate while others are in another competitive group. These systems are represented by signed graphs and the weight of each edge will be positive/negative if the two agents are cooperative/competitive. 
 {The agents are said to exhibit a bipartite consensus if they reach agreement in modulus but not in sign \cite{A}. The distributed Laplacian-like control schemes were developed in \cite{A} for the bipartite consensus of single-integrator agents, and extended to directed signed graph \cite{HZ, MDJ}, general linear systems \cite{ZC, Z, VM}. Also, the bipartite consensus problems have been studied for general linear system with input-saturation \cite{QFZG} and communication nosies \cite{HWLG}.

Due to limitations of sources in the multi-agent systems, we can not assume that agents have continuous access to others' states. Therefore, agents in the system should have strategy to take various actions in automatically schedules instead of doing so continuously. As an effective solution for the scheduling, event-triggered control designs have been developed for MASs. For event-triggered control systems, controller updates are activated only when a sutiably designed event-triggering condition is satisfied.
The event-triggered consensus problem of MASs is first studied by Dimarogonas, Frazzoli and Johansson \cite{DFJ} and has been received tremendous attention after that. Numerous distributed event-triggered consensus protocols were introduced for MASs of first-order \cite{SDJ,SHAD, WMXLSW,YYWJ}, of second-order \cite{LLHZ}, and of linear dynamics \cite{HLF,ZJF}.


The distributed control with event-triggered communications have been applied to the bipartite consensus problem for the single-integrator agents \cite{LCHX, YCCH}, the double-integrator systems \cite{RSL}, the general linear systems \cite{CZDZ, ZCH}, and the heterogeneous systems \cite{WLJH}. In addition, the distributed control with event-triggered communications were studied in the context of the bipartite consensus for the system with input time delay \cite{CZZH} and the prescribed-time bipartite consensus problem \cite{CYH}.
}
\

There are two common issues in designing the event-triggered controls for the consensus problems. One is to guarantee that the controlled system actually achieves the consensus in asymptotic regime or in a prescribed time. The other one is to find a lower bound for the difference between two consecutive triggering times. The larger the lower bound is, the better the system is in saving the energy for the communications. These two issues could be in conflict with each other, and so it is important to design a balanced controller satisfying both the two properties. In particular, it is a non-trivial issue to prove that the controlled system has  a  positive minimum inter-event time (MIET) between two consecutive triggering times while the system achieves the consensus. The works \cite{CU, DL, GCC} designed event-triggered controllers which guarantee  a positive MIET and the consensus is achieved up to a small error. 

The idea of integral-based event-triggering condition yielding larger inter-event intervals was proposed by Mousavi, Ghodrat and Marquez \cite{MGM}. Later, this idea is applied to multi-agent systems in \cite{GM} and it has been investigated further in \cite{MZ,WM,ZJF,ZLW, ZW}. In particular, Zhang, Lunze and Wang \cite{ZLW} showed that the proposed controller system for the consensus problem has a positive MIET and the consensus is achieved asymptotically without a small error. This kind of idea was not yet applied to bipartite consensus for general linear system.  In this article, we propose a novel integral-based event triggering condition for our MASs with linear dynamics for both the bipartite consensus problem which admits the asymptotic bipartite consensus and a positive MIET.

We mention that it is more difficult to prove the consensus  for controls based on integral based event-triggering than that based on the point-wise event-triggering, since the Lyapunov stability analysis should be performed after a time integration (see \eqref{F-temp 5}) in order to use the triggering condition. 
In addition, for applying Barbalat's lemma \cite[Lemma 8.2]{K}, one needs to obtain a uniform bound for the difference $\|\hat{z}_i (t) - z_i (t)\|$ between the true state and the estimated state. We achieve this uniform bound by combining a bound on integration of  $\|\hat{z}_i (t) - z_i (t)\|^2$ on an interval and a regularity estimate based obtained in Lemma \ref{L-boundedness of solutions of non-control systems}.

One more subtle issue in the event-triggered controls is the chattering Zeno problem, which means that $t_{k+1}^i = t_k^i$ where $t_k^i$ is the $k$-th event-triggering instant of agent $i$ (see \eqref{E-event triggered times}). To avoid this issue, we add an exponentially decaying term and show that it does not interrupt the system to achieve the bipartitle asymptotic consensus. Our contributions are summarized as follows.


\textit{Statement of contributions:} 
\begin{enumerate}
\item We design an integral  based 
event-triggered control for the bipartite consensus of general linear multi-agent systems, which is motivated by the control scheme proposed in \cite{ZLW} for consensus of MASs. We add an exponentially decaying term in the triggering function in order to exclude the chattering Zeno behavior.  We remark that the previous works \cite{CL,GCC,HLF,LWDR,YRLC,ZLW,ZJF} implicitly assumed the systems do not have chattering Zeno behavior to establish the Zeno-freeness or the positive MIET of the systems.

\item We show that the bipartite consensus is achieved with our proposed event-triggered control. 
One essential step in proving the consensus using integral based event-triggered control is the application of Barbalat's lemma. This step requires obtaining a uniform bound for the derivative $\dot{V}$ of the a Lyapunov function $V$ used in the proof (\cite{MZ, WM,ZLW, ZJF,ZW}).  We give a full detail for obtaining the uniform boundedness of the derivative (Lemma \ref{L-boundedness of the error function}). 
 
\item We extend the work of \cite{YCCH} on Zeno-free analysis on event-triggered bipartite consensus for single-integrator multiagent systems to general linear dynamics. In addition to prove that the Zeno behavior is excluded, we obtain a uniform lower bound for the inter-event intervals for any agent.   
\end{enumerate}
Our paper is organized as follows. In section II, we review our notations, basic definitions and properties of graphs, and present our problem formulation of bipartite event-triggered consensus of MASs with general linear dynamics. Our main results will be presented in section III. In section IV, we will illustrate the efficiency and feasibility of our results by a numerical example. And in the last section, our conclusions are given.  
\section{Preliminaries}
\subsection{Notations} Let $\R$ be the set of all real numbers and $\N$ be the set of all nonnegative integer numbers. We denote $|s|$ the absolute value of a number $s\in \R$. The Euclidean norm of a vector $x\in \R^n$ is denoted by $\|x\|$. We denote $M_{m\times n}(\R)$ the set of all matrices of $m$ rows and $n$ columns with real valued entries, and when $m=n$ we write $M_n(\R)$ instead. The transpose matrix of a matrix $M\in M_{m\times n}(\R)$ is denoted by $M^T$. Given $M\in M_{m\times n}(\R)$, we denote by $\|M\|$ its matrix operator norm. For every $N\in \N$ we denote by $1_N$ the vector with $N$ columns and all entries are one. The sign function is denoted by $\sgn(\cdot)$. If $A\in M_{m\times n}(\R)$ and $B\in M_{p\times q}(\R)$ then $A\otimes B\in M_{pm\times qn}(\R)$ is the Kronecker product. 
\subsection{Graphs}
Let $\cG=(\cV,\cE,\cW)$ be a signed graph with a set of vertices $\cV=\{1,\dots, N\}$, an edge set $\cE\subset \cV\times \cV$ and an adjacency matrix $\cW\in M_N(\R)$ of the signed weights of $\cG$ with $w_{ij}\neq 0$ if $(j,i)\in \cE$ and $w_{ij}=0$ otherwise. We always assume that the graph has no self-loops, i.e. $w_{ii}=0$ for every $i\in \cV$. The Laplacian matrix $L$ of $\cG$ is defined by $L=C-\cW$, where $C=\diag\bigg( \sum_{j=1}^N|w_{1j}|, \sum_{j=1}^N|w_{2j}|,\cdots, \sum_{j=1}^N|w_{Nj}|\bigg)$.

A connected signed graph $\cG$ is \textit{structurally balanced} if there exists a bipartition $\cV_1, \cV_2$ of nodes such that $w_{ij}\geq 0$ for $i,j\in \cV_k$, $k\in \{1,2\}$, and $w_{ij}\leq 0$ for $i\in \cV_k,j\in \cV_l$, $l\neq k$, $l,k\in \{1,2\}$.

\begin{defn}(\cite[Definition 6]{YCCK}) . 
For a connected graph $\cG=(\cV,\cE,\cW)$ with the Laplacian matrix $L$, the general algebraic connectivity is defined by
\label{L-Algebraic connectivity}
$$\alpha(L)=\min_{y^T\xi=0,y\neq 0}\dfrac{y^TL  y}{y^Ty}.$$
Furthermore $\alpha(L)=\lambda_2(L)$, where $\lambda_2(L)$ is the smallest positive eigenvalue of the Laplacian matrix $L$.
\end{defn}
\begin{lem}
\label{L-signed matrix}
(\cite[Lemma 1]{A})
Let $\cG=(\cV,\cE,\cW)$ be a connected, structurally balanced signed graph. Then there exists a matrix $D=\diag(\sigma_1,\sigma_2,\dots, \sigma_N)$ such that entries of $D\cW D$ are all nonnegative, where $\sigma_i\in \{1,-1\}$ for $i\in \cV$.
\end{lem}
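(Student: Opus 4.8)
The plan is to construct the diagonal matrix $D$ explicitly from the bipartition guaranteed by structural balance, and then to verify the sign condition entrywise by a short case analysis. Since $\cG$ is assumed connected and structurally balanced, the definition of structural balance provides a partition $\cV = \cV_1 \cup \cV_2$ (disjoint) of the vertex set such that $w_{ij} \geq 0$ whenever $i$ and $j$ lie in the same part, and $w_{ij} \leq 0$ whenever they lie in different parts. The natural candidate for $D$ is to assign the gauge $\sigma_i = 1$ to every vertex $i \in \cV_1$ and $\sigma_i = -1$ to every vertex $i \in \cV_2$, and then set $D = \diag(\sigma_1, \sigma_2, \dots, \sigma_N)$. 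By construction each $\sigma_i \in \{1, -1\}$, as required.

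The key computational observation is that, because $D$ is diagonal, the $(i,j)$ entry of the transformed matrix is given by
$$
(D\cW D)_{ij} = \sigma_i\, w_{ij}\, \sigma_j .
$$
It then remains to check that $\sigma_i \sigma_j w_{ij} \geq 0$ for every pair $(i,j)$. I would split into two cases according to the partition. If $i$ and $j$ belong to the same part, then $\sigma_i \sigma_j = 1$, so $(D\cW D)_{ij} = w_{ij} \geq 0$ by the first sign property. If $i$ and $j$ belong to different parts, then $\sigma_i \sigma_j = -1$, so $(D\cW D)_{ij} = -w_{ij} \geq 0$ by the second sign property, since $w_{ij} \leq 0$ in this case. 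In both cases the entry is nonnegative, which establishes the claim.

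There is essentially no serious obstacle here: the entire content of the lemma is that the sign structure imposed by structural balance is exactly the pattern that can be gauged away by a $\pm 1$ diagonal congruence. The only point that warrants care is matching the assignment of the gauges $\sigma_i$ to the two parts so that same-part products yield $+1$ and cross-part products yield $-1$; the diagonal entries $w_{ii} = 0$ cause no issue since they remain zero after the transformation. Connectivity of $\cG$ is not needed for existence of $D$ but guarantees that the bipartition is well defined and consistent across the graph.
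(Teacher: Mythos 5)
Your proof is correct and complete: with the paper's definition of structural balance (which directly posits the bipartition $\cV_1,\cV_2$ with the stated sign pattern), the gauge assignment $\sigma_i=1$ on $\cV_1$, $\sigma_i=-1$ on $\cV_2$ together with the identity $(D\cW D)_{ij}=\sigma_i w_{ij}\sigma_j$ settles the claim by the two-case check you give. The paper itself offers no proof — it cites this as \cite[Lemma 1]{A} — and your argument is exactly the standard one underlying that reference, so there is no substantive difference to report.
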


\subsection{Problem formulation}
Let $A\in M_n(\R)$ and $B\in M_{n\times m}(\R)$. Consider a group of $N$ agents with linear dynamics under the graph $\cG=(\cV,\cE,W)$.
\begin{align}
\label{E-Original}
\dot{x}_i(t)=Ax_i(t)+Bu_i(t), \mbox{ } i\in \cV=\{1,\dots, N\},
\end{align}
where $x_i(t)\in \R^n$ and $u_i(t)\in \R^m$ are the state and the control input of agent i, respectively. We assume that the signed graph $\cG=(\cV,\cE,\cW)$ is structurally balanced and connected.

Every agent $i$ activates a non-decreasing triggering sequence $0=t^i_0\leq t_1^i\leq t^i_2\leq \dots$. For any state $x_i(t)$ of agent $i$, we define $\hat{x}_i (t): = e^{(t-t_k^i)A}x_i (t_k^i)$, where $k:=\argmin\{t-t_\ell:t_\ell\leq t, \ell\in \N\}$. We define the error measurements $e_i(t):=\hat{x}_i(t)-x_i(t)$. We denote $x(t)=[x_1^T(t),\dots,x_N^T(t)]^T$, $\hat{x}(t)=[\hat{x}_1^T(t),\dots,\hat{x}_N^T(t)]^T$, $e_x(t)= [e_1^T(t),\dots,e_N^T(t)]^T$.

The event-triggered consensus protocol is given as
\begin{align}
\label{E-event-triggered consensus protocol}
u_i(t)&=-K\sum_{j=1}^N|w_{ij}|(\hat{x}_i(t)-\sgn(w_{ij})\hat{x}_j(t)),
\end{align}
where $K$ is the control matrix which is designed later.

Using Kronecker product, we can write \eqref{E-Original} as
\begin{equation*}
\label{E-temp of x}
\dot{x}(t)=(I_N\otimes A-L\otimes BK)x(t)-(L\otimes BK)e_x(t).
\end{equation*}
The system of agents is said to be \textit{bipartite consensus} if $\lim_{t\to \infty}\|\sigma_ix_i(t)-\sigma_jx_j(t)\|=0$ for every $1\leq i,j\leq N$.
\begin{defn}
We say that an agent $i$ has \textit{Zeno behavior} if there exists $t_\infty\in \R$ such that $\lim_{k\to \infty}t^i_k=\sum_{k=0}^\infty t_{k+1}^i-t^i_k=t_\infty$. There are two different types of Zeno behavior \cite{AS} as follows. For an agent $i$ which has Zeno behavior, agent $i$ has
\begin{enumerate}
\item
\textit{Chattering Zeno} if there exists $M\in \N$ such that $t^i_{k+1}=t^i_k$ for every $k\geq M$. 
\item \textit{Genuinely Zeno} if $t^i_{k+1}-t^i_k>0$ for every $k\in \N$.
\end{enumerate} 

The system excludes Zeno behavior if all agents do not have Zeno behavior.
\end{defn}
We need to find a triggering condition for each agent $i$ such that the system achieves both bipartite consensus and Zeno-freeness.

\section{Main results} 
In this section, we will present an event-triggered design such that the system \eqref{E-Original} achieves bipartite consensus. Let us introduce some intermediate variables.

We denote $z_i(t):=\sigma_ix_i(t)$, $\hat{z}_i (t) = \sigma_i \hat{x}_i (t)$, $e_{z,i}(t)=\hat{z}_i(t)-z_i(t)$, $z(t)=[z_1^T(t),\dots,z_N^T(t)]^T$, $\hat{z}(t)=[\hat{z}_1^T(t),\dots,\hat{z}_N^T(t)]^T$, $e_z(t)= [e_{z,1}^T(t),\dots,e_{z,N}^T(t)]^T$. 

We define a new Laplacian matrix $L_D:=DLD$, where $D$ is the matrix given in Lemma \ref{L-signed matrix}. Then from \eqref{E-temp of x} we get
\begin{equation*}
\label{E-dynamics of z}
\dot{z}(t)=(I_N\otimes A-L_D\otimes BK)z(t)-(L_D\otimes BK)e_z(t).
\end{equation*}

Let $h:[0,\infty)\to (0,\infty)$ be a continuous function satisfying that $\int_0^\infty h(s)ds=1$, for example choosing $h(t)=\alpha^{-t}$ for some $\alpha>1$. 
We define the triggering functions as
\begin{align}
\label{E-inegral triggering function}
f_i^k(t):=\int_{t_k^i}^t g_i(s)ds,
\end{align}
where $g_i(s)=\|e_{z,i}(s)\|^2-\beta_i\|\sum_{j=1}^N|w_{ij}|(\hat{z}_i(s)-\hat{z}_j(s))\|^2-\beta_ih(s)$ and $\beta_i$ are constants which will be determined later.

The triggering instants for agent $i$ are defined by 
\begin{align}
\label{E-event triggered times}
t^i_{k+1}:=\inf\{t>t^i_k:f^k_i(t)>0\}.
\end{align}

To our knowledge, the following lemma is the first result in literature of proving the freeness of chattering Zeno for multiagent systems without adding a minimum positive MIET for triggering instants.
\begin{lem} For every $i\in\{1,\ldots,N\}$ and $k\in\N$ we have $t^i_{k+1}>t^i_{k}$.
\end{lem}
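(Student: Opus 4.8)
The plan is to exploit the fact that the estimation error of agent $i$ is reset to zero at each of its triggering instants, together with the strictly positive decaying term $h$; this is precisely the role of $-\beta_i h(s)$ in $g_i$, as it forces $g_i$ to be strictly negative at $t_k^i$ regardless of the other quantities. I assume throughout that $\beta_i>0$.

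First I would record that the error vanishes at the triggering instant. By the definition $\hat{x}_i(t)=e^{(t-t_k^i)A}x_i(t_k^i)$ we have $\hat{x}_i(t_k^i)=x_i(t_k^i)$, hence $e_i(t_k^i)=0$ and therefore $e_{z,i}(t_k^i)=\sigma_i e_i(t_k^i)=0$. Consequently
\begin{align*}
g_i(t_k^i)= -\beta_i\left\|\sum_{j=1}^N|w_{ij}|\(\hat{z}_i(t_k^i)-\hat{z}_j(t_k^i)\)\right\|^2-\beta_i h(t_k^i)<0,
\end{align*}
the strict inequality coming solely from $h(t_k^i)>0$, guaranteed by the hypothesis $h:[0,\infty)\to(0,\infty)$.

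Next I would propagate this strict negativity to a right-neighborhood of $t_k^i$. On the $k$-th interval the estimate entering $g_i$ is $\hat{z}_i(s)=\sigma_i e^{(s-t_k^i)A}x_i(t_k^i)$, so $\|e_{z,i}(\cdot)\|^2$ is continuous near $t_k^i$ and vanishes there; since $h$ is continuous and positive, there is $\delta>0$ with $\|e_{z,i}(s)\|^2<\beta_i h(s)$ for all $s\in[t_k^i,t_k^i+\delta)$. Because the middle term of $g_i$ is nonpositive, this gives
\begin{align*}
g_i(s)\leq \|e_{z,i}(s)\|^2-\beta_i h(s)<0,\qquad s\in[t_k^i,t_k^i+\delta).
\end{align*}
Integrating, $f_i^k(t)=\int_{t_k^i}^t g_i(s)\,ds<0$ for every $t\in(t_k^i,t_k^i+\delta)$, so the set $\{t>t_k^i:f_i^k(t)>0\}$ contains no point of $(t_k^i,t_k^i+\delta)$, and hence $t_{k+1}^i=\inf\{t>t_k^i:f_i^k(t)>0\}\geq t_k^i+\delta>t_k^i$.

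The one point I would be careful about is that $g_i$ need not be continuous across the whole interval: if a neighbor $j$ triggers in $(t_k^i,t_{k+1}^i)$, then $\hat{z}_j$ jumps and so does $\|\sum_j|w_{ij}|(\hat{z}_i-\hat{z}_j)\|^2$. The argument above sidesteps this because that term enters with a negative sign and is always nonnegative, so it can simply be discarded in the estimate; only the continuity of $\|e_{z,i}(\cdot)\|^2$ and of $h$ near $t_k^i$ is actually used. This is also the conceptual heart of the statement: without the term $-\beta_i h(s)$ one would only obtain $g_i(t_k^i)\leq 0$, which is insufficient to rule out $t_{k+1}^i=t_k^i$, i.e. to exclude chattering Zeno.
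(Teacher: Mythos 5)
Your proof is correct and follows essentially the same route as the paper: both drop the nonpositive coupling term to reduce $g_i$ to $\|e_{z,i}\|^2-\beta_i h$, use $e_{z,i}(t_k^i)=0$ and the continuity and positivity of $h$ to get strict negativity on a right-neighborhood of $t_k^i$, and integrate to conclude $f_i^k<0$ there. Your explicit remarks on why $e_{z,i}(t_k^i)=0$ and on possible jumps of $\hat{z}_j$ at neighbors' triggering times only make explicit what the paper leaves implicit.
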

\begin{proof}
For $t\geq t^i_k$ we define $g(t):=\|e_{z,i}(t)\|^2-\beta_ih(t).$ Then $g(t^i_k)=-\beta_ih(t^i_k)<0.$
As the continuity of $g$ we get that there exists $\varepsilon>0$ such that for every $t\in [t^i_k,t^i_k+\varepsilon]$ one has $g(t)\leq -\frac{1}{2}\beta_ih(t^i_k).$ Hence, for every $t\in [t^i_k,t^i_k+\varepsilon] $ we get that 
\begin{align*}
f^k_i(t)&=\int_{t^i_k}^tg_i(s)ds\\
&\leq\int^t_{t^i_k}g(s)ds\\
&\leq -\frac{1}{2}\beta_ih(t^i_k)(t-t^i_k).
\end{align*} 
This yields, $f^k_i(t^i_k+\varepsilon)\leq -\frac{1}{2}\beta_ih(t^i_k)\varepsilon<0.$ By the definition of $t^i_{k+1}$, we obtain that $t^i_{k+1}>t^i_k$.
\end{proof}
Now we provide some standard assumptions on the matrices $A,B$ and the signed graph $\cG=(\cV,\cE,\cW)$. 
\begin{assum}
The pair $(A,B)$ stabilizable, i.e there exists $K_1\in M_{n}(\R)$ such that all eigenvalues of $A+K_1B$ has strictly negative real part. We also assume that the graph $\cG$ is structurally balanced and connected. 
\end{assum} 
As $(A,B)$ is stabilizable, applying \cite[Theorem 9.5]{Zab}, there exists a positive matrix $P\in M_n(\R)$ and $\kappa>0$ such that
\begin{align}
\label{F-LMI of P}
A^TP+PA-\alpha(L)PBB^TP <-\kappa I_n.
\end{align}
We choose the control matrix $K:=B^TP$ in \eqref{E-event-triggered consensus protocol}. We define $\bar{z}(t):=\dfrac{1}{N}\sum_{i=1}^Nz_i(t)$. Then $\dot{\bar{z}}(t)=A\bar{z}(t)$. To show this, we observe that
\begin{equation*}
\begin{split}
\dot{z}_i (t) & = \sigma_i \dot{x}_i (t)
\\
& = \sigma_i A x_i (t) + \sigma_i B u_i (t)
\\
& = Az_i (t) - \sigma_i BK \sum_{j=1}^N |w_{ij}| ( \hat{x}_i (t) - \textrm{sgn}(w_{ij}) \hat{x}_j (t))
\\
& = A z_i (t) - BK \sum_{j=1}^N |w_{ij}| (\hat{z}_i (t) - \hat{z}_j (t)).
\end{split}
\end{equation*}
Summing up this for $1 \leq i \leq N$ we find $\dot{\bar{z}}(t) = A \bar{z}(t)$.

Next we note that the difference $e_i (t) = \hat{x}_i (t) -x_i (t)$ satisfies
\begin{equation*}
\dot{e}_i (t) = A (\hat{x}_i (t) - x_i (t)) - B u_i (t) = A e_{i} (t) - Bu_i (t).
\end{equation*}
Since $e_{z,i}(t) = \sigma_i e_i (t)$ we have
\begin{equation}\label{eq-3-1}
\begin{split}
\dot{e}_{z,i} (t)& = \sigma_i \dot{e}_i (t)
\\
&=\sigma_i (Ae_i (t) - Bu_i (t))
\\
& =A e_{z,i} (t) - \sigma_i Bu_i (t).
\end{split}
\end{equation}
We also note that
\begin{equation*}
\begin{split}
u_i (t)&  = -K \sum_{j=1}^N |w_{ij}| (\sigma_i \hat{z}_i (t) - \sigma_i \hat{z}_j (t))
\\
& = - K \sigma_i \sum_{j=1}^N |w_{ij}| (\hat{z}_i (t) - \hat{z}_j (t)).
\end{split}
\end{equation*}
Inserting this to \eqref{eq-3-1} we find
\begin{equation}\label{eq-3-2}
\dot{e}_{z,i} (t) = Ae_{z,i} (t) + K \sum_{j=1}^N |w_{ij}| (\hat{z}_i (t) - \hat{z}_j (t)).
\end{equation}

We set $\delta_i (t) = z_i (t) - \bar{z}(t)$ and $\delta (t) = [\delta_1 (t)^T, \cdots, \delta_N (t)^T]^T$. Also we define the Lyapunov function $V(t):=\delta^T(t)(I_N\otimes P)\delta(t)$.
\begin{lem}
\label{L-decreasing of the Lyapunov function}
Let $c>0$ such that $\beta:=\kappa-c\|L_D\otimes PBB^TP\|^2>0.$ Put $\beta_{max}:=\max\{\beta_1,\dots, \beta_N\}$. Assume that $\beta_{max}<\min\bigg\{\dfrac{1}{2\|L_D\|^2}, \dfrac{\beta c}{2\|L_D\|^2(\beta c+1)}\bigg\}$. Then for every $t\geq 0$ we get that
\begin{align*}
V(t)-V(0)&\leq -\beta\int_{0}^t\|\delta(s)\|^2ds+\dfrac{1}{c}\int_{0}^t\|e_z(s)\|^2ds\\
&=-c_1\int_0^t\|\delta(s)\|^2ds+c_2,
\end{align*}
where  $c_1=\beta- \dfrac{2\beta_{max}\|L_D\|^2}{c(1-2\beta_{max}\|L_D\|^2)}>0$ and $c_2=\dfrac{N\beta_{max}}{c(1-2\beta_{max}\|L_D\|^2)}$.
\end{lem}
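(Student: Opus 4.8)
The plan is to obtain a differential inequality for $V$, integrate it, and then use the triggering rule to eliminate the resulting error integral. First I would determine the dynamics of $\delta$. Writing $\delta(t)=z(t)-1_N\otimes\bar z(t)$ and using $\dot{\bar z}(t)=A\bar z(t)$ together with the $z$-dynamics, the decisive observation is that $L_D 1_N=0$: indeed $L_D=DLD=C-D\cW D$ is a genuine Laplacian with nonnegative weights (Lemma~\ref{L-signed matrix}), whose row sums vanish. Consequently $(L_D\otimes BK)(1_N\otimes\bar z)=0$, so $(L_D\otimes BK)z=(L_D\otimes BK)\delta$ and
$$\dot\delta(t)=(I_N\otimes A-L_D\otimes BK)\delta(t)-(L_D\otimes BK)e_z(t).$$
Differentiating $V$ and inserting $K=B^TP$ then gives
$$\dot V=\delta^T\big[I_N\otimes(A^TP+PA)-2L_D\otimes PBB^TP\big]\delta-2\delta^T(L_D\otimes PBB^TP)e_z.$$

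To control the first term I would diagonalize $L_D=U\Lambda U^T$ with $\Lambda=\diag(0,\lambda_2,\dots,\lambda_N)$ and first eigenvector $1_N/\sqrt N$; since $D$ is orthogonal, $L_D$ and $L$ share their spectrum and $\lambda_2=\alpha(L)$. Setting $\eta=(U^T\otimes I_n)\delta$, the key point is that $\sum_i\delta_i=0$ makes the first block $\eta_1=0$, so the quadratic form reduces to $\sum_{i\ge2}[\eta_i^T(A^TP+PA)\eta_i-2\lambda_i\eta_i^TPBB^TP\eta_i]$. Using $2\lambda_i\ge2\alpha(L)\ge\alpha(L)$, $PBB^TP\ge0$, and the LMI \eqref{F-LMI of P}, each summand is at most $-\kappa\|\eta_i\|^2$, whence the first term is $\le-\kappa\|\delta\|^2$. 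For the cross term I apply Young's inequality, $-2\delta^TWe_z\le c\|W\|^2\|\delta\|^2+\frac1c\|e_z\|^2$ with $W=L_D\otimes PBB^TP$, to obtain $\dot V\le-\beta\|\delta\|^2+\frac1c\|e_z\|^2$; integrating over $[0,t]$ yields the first displayed inequality.

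It then remains to eliminate $\int_0^t\|e_z\|^2ds$. On each interval $[t_k^i,t_{k+1}^i)$ the rule \eqref{E-event triggered times} forces $f_i^k\le0$, i.e. $\int_{t_k^i}^t\|e_{z,i}\|^2ds\le\beta_i\int_{t_k^i}^t\|\sum_j|w_{ij}|(\hat z_i-\hat z_j)\|^2ds+\beta_i\int_{t_k^i}^th\,ds$. Summing over the intervals and over $i$, and using $\int_0^\infty h=1$, the identity $\sum_j|w_{ij}|(\hat z_i-\hat z_j)=((L_D\otimes I_n)\hat z)_i$, and $L_D1_N=0$ (so that $(L_D\otimes I_n)\hat z=(L_D\otimes I_n)(\delta+e_z)$), I would bound $\int_0^t\|e_z\|^2ds\le2\beta_{max}\|L_D\|^2\int_0^t(\|\delta\|^2+\|e_z\|^2)ds+N\beta_{max}$. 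Since $\beta_{max}<1/(2\|L_D\|^2)$, the $e_z$-integral can be moved to the left and solved for; substituting the result into the first inequality produces precisely $-c_1\int_0^t\|\delta\|^2ds+c_2$, and a short algebraic manipulation shows that $c_1>0$ is equivalent to the second upper bound imposed on $\beta_{max}$.

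I expect the main obstacle to be this last step: the bound on $\int_0^t\|e_z\|^2ds$ is self-referential, so the same integral appears on both sides and must be isolated, which is exactly what the hypothesis $\beta_{max}<1/(2\|L_D\|^2)$ makes possible. Before that one must re-express the triggering quantity $\sum_j|w_{ij}|(\hat z_i-\hat z_j)$ through $L_D$ and project out the consensus component $1_N\otimes\bar z$, so that only $\|\delta\|^2$ and $\|e_z\|^2$ survive. The diagonalization in the second step is also delicate, since it crucially relies on $\eta_1=0$ in order to invoke $\alpha(L)=\lambda_2$ rather than the vanishing smallest eigenvalue.
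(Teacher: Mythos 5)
Your proposal is correct and follows essentially the same route as the paper's proof: derive $\dot V\le-\kappa\|\delta\|^2-2\delta^T(L_D\otimes PBB^TP)e_z$, apply Young's inequality to get $\dot V\le-\beta\|\delta\|^2+\frac{1}{c}\|e_z\|^2$, integrate, and then use the integral triggering condition together with $(L_D\otimes I_n)z=(L_D\otimes I_n)\delta$ and $\int_0^\infty h=1$ to obtain the self-referential bound on $\int_0^t\|e_z\|^2\,ds$, which the hypothesis $\beta_{max}<1/(2\|L_D\|^2)$ lets you solve and substitute back. The only presentational difference is that you justify the step $\delta^T\bigl[I_N\otimes(A^TP+PA)-2L_D\otimes PBB^TP\bigr]\delta\le-\kappa\|\delta\|^2$ by explicitly diagonalizing $L_D$ and using $\eta_1=0$, whereas the paper invokes Definition \ref{L-Algebraic connectivity} directly; your spectral argument is in fact a careful justification of exactly that invocation.
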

\begin{proof} We have
\begin{align*}
&\dot{V}(t)\\
=&\delta^T(t)(I_N\otimes A^T-L_D\otimes K^TB^T)(I_N\otimes P)\delta(t)\\
&-e^T(t)(L_D\otimes K^TB^T)(I_N\otimes P)\delta(t)\\
&+\delta^T(t)(I_N\otimes P)[(I_N\otimes A-L\otimes BK)\delta(t)-(L_D\otimes BK)e_z(t)]\\
=&\delta^T(t)[I_N\otimes (A^TP+PA)]\delta(t)-2\delta^T(t)(L_D\otimes PBB^TP)\delta(t)\\
&-2\delta^T(t)(L_D\otimes PBB^TP)e_z(t).
\end{align*}
From Definition \ref{L-Algebraic connectivity} we get that 
\begin{align*}
\delta^T(t)(L_D\otimes PBB^TP)\delta(t)\geq \alpha(L_D)\delta^T(t)(I_N\otimes PBB^TP)\delta(t).
\end{align*}
Therefore 
\begin{align*}
\dot{V}(t)\leq &\delta^T(t)[I_N\otimes (A^TP+PA-\alpha(L)PBB^TP)]\delta(t)\\
&-2\delta^T(t)(L_D\otimes PBB^TP)e_z(t)\\
\leq& -\delta^T(t)(I_N\otimes\kappa I_n)\delta(t)-2\delta^T(t)(L_D\otimes PBB^TP)e_z(t)\\
\leq& -\kappa\delta^T(t)\delta(t)-2\delta^T(t)(L_D\otimes PBB^TP)e_z(t).
\end{align*}

On the other hand,
\begin{align*}
&-2\delta^T(t)(L_D\otimes PBB^TP)e_z(t)\\
&\leq c\delta^T(t)(L_D\otimes PBB^TP)[\delta^T(t)(L_D\otimes PBB^TP)]^T+\dfrac{\|e_z(t)\|^2}{c}\\
&\leq c\|L_D\otimes PBB^TP\|^2\delta^T(t)\delta(t)+\dfrac{\|e_z(t)\|^2}{c}.
\end{align*}
Hence 
\begin{align*}
\dot{V}(t)\leq -\beta\|\delta(t)\|^2+\dfrac{\|e_z(t)\|^2}{c}.
\end{align*}
Therefore
\begin{align}
\label{F-temp 5}
V(t)-V(0)&\leq -\beta\int_{0}^t\|\delta(s)\|^2ds+\dfrac{1}{c}\int_{0}^t\|e_z(s)\|^2ds.
\end{align}
From the event triggered conditions \eqref{E-inegral triggering function} and \eqref{E-event triggered times}, for every $i=1,\dots, N$, $k\in \N$ and $t\in [t_k,t_{k+1})$ we get
\begin{align*}
&\int_{t^i_k}^t\|e_{z,i}(s)\|^2ds\\
\leq &\beta_i\int_{t^i_k}^t\|\sum_{j=1}^Nw_{ij}(\hat{z}_i(s)-\hat{z}_j(s)\|^2ds
+\beta_i\int_{t^i_k}^th(s)ds\\
=&\beta_i\int_{t^i_k}^t\|\sum_{j=1}^Nw_{ij}(z_i(s)-z_j(s)+e_{z,i}(s)-e_{z,j}(s)\|^2ds\\
&+\beta_i\int_{t^i_k}^th(s)ds.
\end{align*}
Hence for every $t\geq 0$ we have
\begin{align*}
&\int_{0}^t\|e_{z,i}(s)\|^2ds\\
\leq &\beta_i\int_{0}^t\|\sum_{j=1}^Nw_{ij}(z_i(s)-z_j(s)+e_{z,i}(s)-e_{z,j}(s)\|^2ds\\
&+\beta_i\int_{0}^th(s)ds\\
\leq &\beta_i\int_{0}^t\|\sum_{j=1}^Nw_{ij}(z_i(s)-z_j(s)+e_{z,i}(s)-e_{z,j}(s)\|^2ds+\beta_i.
\end{align*}
Let $L_{D,i}$ be the ith row of $L_D$ then for every $t\geq 0$ we get
\begin{align*}
\int_{0}^t\|e_{z,i}(s)\|^2ds&\leq 2\beta_i\int_{0}^t(\|(L_{D,i}\otimes I_n)z(s)\|^2\\
+&\|(L_i\otimes I_n)e_z(s)\|^2)ds+\beta_i.
\end{align*}
Therefore we obtain
\begin{align*}
&\int_0^t\|e_z(s)\|^2ds\\
\leq &2\beta_{max}\int_{0}^t(\|(L_D\otimes I_n)z(s)\|^2+\|(L_D\otimes I_n)e_z(s)\|^2)ds\\
&+N\beta_{max}.
\end{align*}
As $(L_D\otimes I_n)\delta(t)=(L_D\otimes I_n)z(t)$, combining with $\beta_{max}<\dfrac{1}{2\|L_D\|^2}$ we obtain
\begin{align}
\int_0^t\|e_z(s)\|^2ds&\leq\dfrac{2\beta_{max}\|L_D\|^2}{1-2\beta_{max}\|L_D\|^2}\int_0^t\|\delta(s)\|^2ds\\
+&\frac{N\beta_{max}}{1-2\beta_{max}\|L_D\|^2}.
\label{F-temp4}
\end{align}
Combining \eqref{F-temp 5} with \eqref{F-temp4} we get that
\begin{align*}
V(t)-V(0)\leq &-\beta\int_0^t\|\delta(s)\|^2ds+\frac{N\beta_{max}}{c(1-2\beta_{max}\|L_D\|^2)}\\
&+\dfrac{2\beta_{max}\|L_D\|^2}{c(1-2\beta_{max}\|L_D\|^2)}\int_0^t\|\delta(s)\|^2ds\\
 =&-c_1\int_0^t\|\delta(s)\|^2ds+c_2.
\end{align*}
\end{proof}
\begin{lem}
\label{L-boundedness of solutions of non-control systems}
Suppose that $\dot{y}(t) = Ay(t)$ for $t \geq t_k^i$. Then we have
\begin{equation*}
\|y(T)\|_2^2 \leq \|y(t_k^i)\|_2^2 + 2\|A\|\int_{t_k^i}^T \|y(t)\|_2^2 dt
\end{equation*}
for all $T \geq t_k^i$, where $\|A\| := \max_{v \in \mathbb{R}^n \setminus \{0\}} \frac{\|Av\|}{\|v\|}$.
\end{lem}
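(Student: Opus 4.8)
The plan is to work directly with the scalar function $\phi(t) := \|y(t)\|_2^2 = y(t)^T y(t)$ and control its time derivative. Since $y$ solves the linear ODE $\dot{y}(t) = Ay(t)$, it is differentiable, and so is $\phi$. First I would compute
\begin{equation*}
\dot{\phi}(t) = 2 y(t)^T \dot{y}(t) = 2 y(t)^T A y(t).
\end{equation*}

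Next I would bound this derivative pointwise. Applying the Cauchy--Schwarz inequality followed by the definition of the operator norm, one gets
\begin{equation*}
|\dot{\phi}(t)| = 2 |y(t)^T A y(t)| \leq 2 \|y(t)\| \, \|A y(t)\| \leq 2\|A\| \, \|y(t)\|_2^2 = 2\|A\| \, \phi(t).
\end{equation*}
In particular $\dot{\phi}(t) \leq 2\|A\| \, \phi(t)$ for all $t \geq t_k^i$.

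Finally I would integrate this differential inequality over $[t_k^i, T]$. Since $\phi$ is continuously differentiable, the fundamental theorem of calculus gives
\begin{equation*}
\|y(T)\|_2^2 - \|y(t_k^i)\|_2^2 = \int_{t_k^i}^T \dot{\phi}(t)\, dt \leq 2\|A\| \int_{t_k^i}^T \|y(t)\|_2^2\, dt,
\end{equation*}
which is exactly the claimed estimate after rearranging.

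This argument is essentially routine, so I do not anticipate a genuine obstacle; the only point requiring minor care is the pointwise bound on $\dot{\phi}$, where one must correctly chain Cauchy--Schwarz with the operator-norm inequality $\|Ay(t)\| \leq \|A\|\,\|y(t)\|$ rather than attempting to symmetrize $A$ or invoke an eigenvalue bound (which would be unnecessary and would not match the stated constant). Note also that the lemma deliberately avoids the sharper Gr\"onwall-type exponential bound, keeping instead the integral form, which is precisely what is needed later to pair the integrated error estimate with a regularity bound in the application of Barbalat's lemma.
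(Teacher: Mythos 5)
Your proof is correct and follows essentially the same route as the paper: differentiate $\|y(t)\|^2$, bound the derivative by $2\|A\|\,\|y(t)\|^2$ via Cauchy--Schwarz and the operator norm, then integrate over $[t_k^i,T]$. The only difference is expository (you name the intermediate function $\phi$ and spell out the Cauchy--Schwarz step, which the paper leaves implicit), so there is nothing to change.
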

\begin{proof}
We find
\begin{equation*}
\begin{split}
\frac{d}{dt} \|y(t)\|^2 & = 2 \langle y' (t), ~y(t)\rangle
\\
& = 2 \langle A y(t),~y(t)\rangle
\\
&\leq 2 \|A\| \|y(t)\|^2.
\end{split}
\end{equation*}
Integrating this inequality over $[t_k^i, T]$ we obtain 
\begin{equation*}
\int_{t_k^i}^T \frac{d}{dt}\|y(t)\|^2 dt \leq \int_{t_k^i}^T 2\|A\| \|y(t)\|^2 dt.
\end{equation*}
This completes the proof.
\end{proof}
\begin{lem}
\label{L-boundedness of the error function}
For every $i$ the function $\|\hat{z}_i(t)-z_i(t)\|$ is uniformly bounded for $t \geq 0$.
\end{lem}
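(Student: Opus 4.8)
The plan is to extract two consequences from Lemma~\ref{L-decreasing of the Lyapunov function} and then feed them into the free-dynamics regularity estimate of Lemma~\ref{L-boundedness of solutions of non-control systems}, applied not to $e_{z,i}$ but to a carefully chosen auxiliary quantity. First, since $c_1>0$ and $\int_0^t\|\delta(s)\|^2\,ds\ge 0$, the inequality of Lemma~\ref{L-decreasing of the Lyapunov function} gives $V(t)\le V(0)+c_2$ for all $t\ge 0$. As $P$ is positive definite, $V(t)=\delta^T(t)(I_N\otimes P)\delta(t)\ge \lambda_{\min}(P)\|\delta(t)\|^2$, so $\delta$ is uniformly bounded, say $\|\delta_i(t)\|\le M_1$ for all $i$ and $t$. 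The same inequality together with $V(t)\ge 0$ yields $\int_0^\infty\|\delta(s)\|^2\,ds\le (V(0)+c_2)/c_1<\infty$, and then \eqref{F-temp4} gives $\int_0^\infty\|e_z(s)\|^2\,ds<\infty$; in particular $\int_0^\infty\|e_{z,i}(s)\|^2\,ds<\infty$ for each $i$, since $\|e_{z,i}\|\le\|e_z\|$.

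The key observation is that $e_{z,i}$ itself does not obey the uncontrolled dynamics $\dot y=Ay$ (it is forced, see \eqref{eq-3-2}), so Lemma~\ref{L-boundedness of solutions of non-control systems} cannot be applied to it directly. Instead I would introduce $\hat\delta_i(t):=\hat z_i(t)-\bar z(t)$. On each inter-event interval $[t_k^i,t_{k+1}^i)$ both $\hat z_i$ and $\bar z$ evolve under $\dot{(\cdot)}=A(\cdot)$, hence $\dot{\hat\delta}_i(t)=A\hat\delta_i(t)$ there. At a triggering instant $\hat z_i(t_k^i)=z_i(t_k^i)$, so $\hat\delta_i(t_k^i)=\delta_i(t_k^i)$, and moreover $e_{z,i}(t)=\hat z_i(t)-z_i(t)=\hat\delta_i(t)-\delta_i(t)$.

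Now I apply the estimate of Lemma~\ref{L-boundedness of solutions of non-control systems} with $y=\hat\delta_i$ on $[t_k^i,T]$ for $T\in[t_k^i,t_{k+1}^i)$, obtaining
\[
\|\hat\delta_i(T)\|^2\le \|\delta_i(t_k^i)\|^2+2\|A\|\int_{t_k^i}^T\|\hat\delta_i(s)\|^2\,ds.
\]
The first term is bounded by $M_1^2$. For the integral I use the pointwise inequality $\|\hat\delta_i\|^2=\|e_{z,i}+\delta_i\|^2\le 2\|e_{z,i}\|^2+2\|\delta_i\|^2$, so that $\int_{t_k^i}^T\|\hat\delta_i\|^2\,ds\le 2\int_0^\infty\|e_{z,i}\|^2\,ds+2\int_0^\infty\|\delta_i\|^2\,ds$, a fixed finite constant independent of $k$ and $T$ by the first paragraph (the same argument covers a final unbounded interval if agent $i$ triggers only finitely often). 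This yields a bound on $\|\hat\delta_i(T)\|$ that is uniform over all intervals, i.e. $\hat\delta_i$ is uniformly bounded on $[0,\infty)$. Finally $\|\hat z_i(t)-z_i(t)\|=\|e_{z,i}(t)\|=\|\hat\delta_i(t)-\delta_i(t)\|\le\|\hat\delta_i(t)\|+\|\delta_i(t)\|$ is uniformly bounded, which is the assertion.

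The main obstacle is purely conceptual: identifying the correct quantity for the regularity lemma. Applying it to $e_{z,i}$ fails because $e_{z,i}$ is forced, while applying it to $\hat z_i$ alone is useless because $\hat z_i$ need not be bounded when $A$ is unstable (the mean-field mode $\bar z$ may grow). Subtracting $\bar z$ resolves both difficulties simultaneously: $\hat\delta_i$ is a free trajectory that resets at each triggering time to the uniformly bounded value $\delta_i(t_k^i)$. Once this is seen, the remaining ingredients—the two $L^2$ bounds and the uniform bound on $\delta$—follow immediately from Lemma~\ref{L-decreasing of the Lyapunov function}, and the rest is the routine combination displayed above.
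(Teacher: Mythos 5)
Your proposal is correct and follows essentially the same route as the paper: both reduce the problem to bounding $\hat z_i(t)-\bar z(t)$, which satisfies the free dynamics $\dot y = Ay$ on each inter-event interval with reset value $\delta_i(t_k^i)$ uniformly bounded via the Lyapunov estimate, and both combine the $L^2$ bounds on $\delta$ and $e_z$ (from Lemma~\ref{L-decreasing of the Lyapunov function} and \eqref{F-temp4}) with the regularity estimate of Lemma~\ref{L-boundedness of solutions of non-control systems}. The only difference is cosmetic: the paper phrases the final step as a contradiction argument, whereas you apply the estimate directly, which is if anything slightly cleaner.
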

\begin{proof}
From Lemma \ref{L-decreasing of the Lyapunov function} there exists $M>0$ such that $\|z_i(t)-\bar{z}(t)\|\leq M$ for every $t>0$ and every $i=1,\dots, N$. Therefore to prove $\|\hat{z}_i(t)-z_i(t)\|$ is bounded, it suffices to show that $\|\hat{z}_i(t)-\bar{z}(t)\|$ is bounded.

Since $P$ is a positive define matrix, so is $I_N\otimes P$. Hence, $V(t)=\delta^T(t)(I_N\otimes P)\delta(t)\geq 0$. Therefore, from Lemma \ref{L-decreasing of the Lyapunov function} we get that \begin{align}\label{F-bounded integral of delta}
\int_0^t\|\delta(s)\|^2ds\leq \frac{V(0)+c_2}{c_1}.
\end{align} 
Since \eqref{F-bounded integral of delta} and \eqref{F-temp4}, for every $i\in\{1,\ldots,N\}$, we obtain that \begin{align}\label{F-B1}
\int_0^t\|e_{z,i}(s)\|^2ds\leq \int_0^t\|e_z(s)\|^2ds\leq c_3,
\end{align}
where $c_3:=(2\beta_{\max}\|L_D\|^2(V(0)+c_2))/(c_1(1-2\beta_{\max}\|L_D\|^2))+c_2$. 

Furthermore, for every $t\geq 0$ we have that \begin{align*}
&\int_0^t\|\hat{z}_{i}(s)-\overline{z}(s)\|^2\\
&\leq 2\int_0^t\|\hat{z}_{i}(s)-z_i(s)\|^2ds+2\int_0^t\|z_i(s)-\overline{z}(s)\|^2ds\\
&=2\int_0^t\|e_{z,i}(s)\|^2ds+2\int_0^t\|\delta_i(s)\|^2ds.
\end{align*}
From \eqref{F-bounded integral of delta} and \eqref{F-B1} for every $t\geq 0$, we obtain that 
\begin{align}\label{F-B2}
\int_0^t\|\hat{z}_{i}(s)-\overline{z}(s)\|^2\leq 2\left(c_3+\frac{V(0)+c_2}{c_1}\right).
\end{align}

Assume that $\|\hat{z}_{i}(t)-\overline{z}(t)\|$ is not bounded. Then there exists $M_1>M^2+4\|A\|(c_3+(V(0)+c_2)/c_1)$ such that $\|\hat{z}_{i}(T)-\overline{z}(T)\|>M_1$ for some $T\in [t^{i}_{k},t^{i}_{k+1})$. Next, we define $y(t):=\hat{z}_{i}(t)-\overline{z}(t)$. Then $\dot{y}(t)=Ay(t)$. Therefore, by Lemma \ref{L-boundedness of solutions of non-control systems} and \eqref{F-B2}
\begin{align*}
M_1 &<\|\hat{z}_{i}(T)-\overline{z}(T)\|^2\\
&\leq \|\hat{z}_{i}(t^{i}_{k})-\overline{z}(t^{i}_{k})\|^2+2\|A\|\int_{t^{i}_{k}}^T\|\hat{z}_{i}(t)-\overline{z}(t)\|^2dt\\
&=\|z_i(t^{i}_{k})-\overline{z}(t^{i}_{k})\|^2+2\|A\|\int_{t^{i}_{k}}^T\|\hat{z}_{i}(t)-\overline{z}(t)\|^2dt\\
&\leq M^2+4\|A\|\left(c_3+\frac{V(0)+c_2}{c_1}\right)\\
&<M_1.
\end{align*}
This is a contradiction. Hence, we get that $\|e_{z,i}(t)\|$ is bounded for every $i=1,\ldots,N$.
\end{proof}
\begin{thm}
\label{T-main}
Let a multi-agent system as \eqref{E-Original} such that the pair $(A,B)$ is stabilizable, and the graph $\cG=(\cV,\cE,\cW)$ is connected and structurally balanced. Let $P\in M_n(\R)$ be a positive matrix being a solution of \eqref{F-LMI of P}. We consider the event-triggered consensus protocol as \eqref{E-event-triggered consensus protocol} with the control matrix $K:=B^TP$. The event triggered times are determined by \eqref{E-inegral triggering function} and \eqref{E-event triggered times}. Then the system achieves consensus. Furthermore, the  multi-agent system has a positive minimum inter-event time, i.e. there exists $\tau>0$ such that $t^i_{k+1}-t^i_k\geq \tau$ for every $i=1,\dots, N$, $k\in \N$. 
\end{thm}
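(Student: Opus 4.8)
The plan is to treat the two assertions separately, drawing on the estimates already in hand. For the \emph{bipartite consensus}, I would invoke Barbalat's lemma \cite[Lemma 8.2]{K} applied to the nonnegative map $t\mapsto \|\delta(t)\|^2$. Lemma~\ref{L-decreasing of the Lyapunov function} gives $c_1\int_0^t\|\delta(s)\|^2\,ds\le V(0)+c_2$ for all $t$, so $\int_0^\infty\|\delta(s)\|^2\,ds<\infty$; the same lemma bounds $V(t)\le V(0)+c_2$, and since $V(t)\ge \lambda_{\min}(P)\|\delta(t)\|^2$ the trajectory $\delta$ is uniformly bounded. It then remains only to check that $\|\delta(t)\|^2$ is uniformly continuous, i.e. that $\frac{d}{dt}\|\delta(t)\|^2=2\delta^T(t)\dot\delta(t)$ is bounded. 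Writing $\dot\delta_i=A\delta_i-BK\sum_{j=1}^N|w_{ij}|(\hat z_i-\hat z_j)$ and expanding $\hat z_i-\hat z_j=(\delta_i-\delta_j)+(e_{z,i}-e_{z,j})$, the uniform bound on $\|e_{z,i}\|$ from Lemma~\ref{L-boundedness of the error function} together with the boundedness of $\delta$ shows $\dot\delta$ is uniformly bounded. Barbalat then forces $\|\delta(t)\|\to0$, whence $\|z_i-z_j\|=\|\delta_i-\delta_j\|\to0$, that is $\|\sigma_ix_i-\sigma_jx_j\|\to0$.

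For the \emph{positive MIET}, the key is that the error is reset at every triggering instant, $e_{z,i}(t_k^i)=0$, and is generated purely by the control signal $S_i(s):=\sum_{j=1}^N|w_{ij}|(\hat z_i(s)-\hat z_j(s))$ through \eqref{eq-3-2}. Variation of constants gives $e_{z,i}(t)=\int_{t_k^i}^t e^{(t-s)A}KS_i(s)\,ds$, so for any fixed $\tau_0>0$ and $t\in[t_k^i,t_k^i+\tau_0]$ we get $\|e_{z,i}(t)\|\le e^{\|A\|\tau_0}\|K\|\int_{t_k^i}^t\|S_i(s)\|\,ds$. Cauchy--Schwarz then yields $\|e_{z,i}(t)\|^2\le e^{2\|A\|\tau_0}\|K\|^2(t-t_k^i)\int_{t_k^i}^t\|S_i(s)\|^2\,ds$, and integrating over $[t_k^i,t_k^i+\tau]$ with $\tau\le\tau_0$ (using monotonicity of the inner integral) produces
\[
\int_{t_k^i}^{t_k^i+\tau}\|e_{z,i}(s)\|^2\,ds\le e^{2\|A\|\tau_0}\|K\|^2\frac{\tau^2}{2}\int_{t_k^i}^{t_k^i+\tau}\|S_i(s)\|^2\,ds.
\]

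Consequently, once $\tau$ is small enough that $e^{2\|A\|\tau_0}\|K\|^2\tau^2/2\le\beta_i$, the inequality $\int_{t_k^i}^{t}\|e_{z,i}\|^2\le \beta_i\int_{t_k^i}^t\|S_i\|^2\le \beta_i\int_{t_k^i}^t\|S_i\|^2+\beta_i\int_{t_k^i}^t h$ holds for every $t\in(t_k^i,t_k^i+\tau]$ (the nonnegative $h$-term only helps), i.e. $f_i^k(t)\le0$ there; by the definition \eqref{E-event triggered times} this means $t_{k+1}^i\ge t_k^i+\tau$. Setting $\tau:=\min_i\min\{\tau_0,\sqrt{2\beta_i}/(e^{\|A\|\tau_0}\|K\|)\}>0$ gives a lower bound independent of $i$ and $k$.

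The step I expect to be the main obstacle is making this MIET bound \emph{uniform} in $k$: a priori one fears that as $t_k^i\to\infty$ both the signal $S_i$ and the decaying term $h$ shrink, so the threshold might be crossed arbitrarily fast. The resolution is structural: because $e_{z,i}$ is driven by $S_i$ from the zero value $e_{z,i}(t_k^i)=0$, it decays together with $S_i$, and the Cauchy--Schwarz estimate quantifies this by comparing $\int\|e_{z,i}\|^2$ with $\int\|S_i\|^2$ through a coefficient of order $\tau^2$ that is independent of $k$ --- exactly what the integral-type rule can absorb. For the consensus half the decisive ingredient is instead the uniform boundedness of $\|e_{z,i}\|$ supplied by Lemma~\ref{L-boundedness of the error function}, without which $\dot\delta$ could not be controlled and Barbalat's lemma would be inapplicable.
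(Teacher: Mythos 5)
Your proposal is correct, and its consensus half coincides with the paper's proof: both feed Barbalat's lemma with the finite integral $\int_0^\infty\|\delta(s)\|^2\,ds<\infty$ and the boundedness of $\delta$ from Lemma \ref{L-decreasing of the Lyapunov function}, and both obtain boundedness of $\dot\delta$ (equivalently, uniform continuity of $\|\delta\|^2$) from the uniform bound on $\|e_{z,i}\|$ in Lemma \ref{L-boundedness of the error function}; incidentally your bound $V(t)\le V(0)+c_2$ is the correct consequence of that lemma, whereas the paper writes $V(t)\le V(0)$ and $\int_0^t\|\delta\|^2\le V(0)/c_1$, dropping the $c_2$. Where you genuinely diverge is the technique for the MIET. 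The paper differentiates $\|e_{z,i}(t)\|^2$, applies Young's inequality to get $\frac{d}{dt}\|e_{z,i}\|^2\le(2\|A\|+\|BK\|^2)\|e_{z,i}\|^2+\|S_i\|^2$, and uses a Gronwall-type comparison to obtain $\|e_{z,i}(t)\|^2\le e^{(2\|A\|+\|BK\|^2)(t-t_k^i)}\int_{t_k^i}^t\|S_i\|^2$; integrating and evaluating the trigger at $t_{k+1}^i$ gives $\int_{t_k^i}^{t_{k+1}^i}e^{(2\|A\|+\|BK\|^2)(s-t_k^i)}\,ds\ge\beta_i$, hence a uniform $\tau$. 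You instead solve \eqref{eq-3-2} by variation of constants from the reset value $e_{z,i}(t_k^i)=0$ and apply Cauchy--Schwarz, replacing the paper's factor $\int_0^{t-t_k^i}e^{(2\|A\|+\|BK\|^2)r}\,dr$ by $e^{2\|A\|\tau_0}\|BK\|^2(t-t_k^i)^2/2$; both factors vanish as $t\downarrow t_k^i$ and are independent of $k$, so both yield a positive MIET. Your contrapositive framing (showing $f_i^k(t)\le 0$ on $(t_k^i,t_k^i+\tau]$, so no event can occur there) is in fact slightly cleaner than the paper's, since the paper implicitly divides by $\int_{t_k^i}^{t_{k+1}^i}\|S_i\|^2$, which requires separately noting that this integral cannot vanish at a triggering instant (the strictly positive $h$-term guarantees this); your argument never divides and so absorbs that degenerate case automatically. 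One cosmetic correction: equation \eqref{eq-3-2} as printed has a typo --- the forcing term should be $BK\sum_j|w_{ij}|(\hat z_i-\hat z_j)$, not $K\sum_j|w_{ij}|(\hat z_i-\hat z_j)$, as the input enters through $B$ --- so the constant $\|K\|$ in your variation-of-constants bound should read $\|BK\|$, exactly as in the paper's own estimate; this changes nothing structural.
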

\begin{proof}
As $P$ is a positive matrix so is $I_N\otimes P$. Therefore $V(t)\geq 0$ for every $t$. Hence from Lemma \ref{L-decreasing of the Lyapunov function} we get that $\int_0^t\|\delta(s)\|^2ds\leq \dfrac{V(0)}{c_1}$ and therefore $\lim_{t\to \infty}\int_0^t\|\delta(s)\|^2ds$ is finite. Let $\kappa_1$ be the smallest eigenvalue of $I_N\otimes P$. Then $V(0)\geq V(t)=\delta^T(t)(I_N\otimes P)\delta(t)\geq \kappa_1\delta^T(t)\delta(t)$. Hence $\delta(t)$ is bounded. On the other hand, as
\begin{align*}
\dot{\delta(t)}=(I_N\otimes A-L_D\otimes BK)\delta(t)-(L_D\otimes BK)e_z(t),
\end{align*}
applying Lemma \ref{L-boundedness of the error function} we get that $\dot{\delta}(t)$ is also bounded. Applying Barbalat's lemma \cite[Lemma 8.2]{K}, we get that $\lim_{t\to \infty}\delta(t)=0$ and hence $\lim_{t\to \infty}\|z_i(t)-z_j(t)\|=0$ for every $i,j$.

Now we prove that the system has positive minimum inter-event time. We note that
\begin{equation*}
\begin{split}
\frac{d}{dt}\|e_{z,i} (t)\|^2  =&2 \langle e_{z,i} (t),~\dot{e}_{z,i} (t)\rangle
\\
\leq &2 \|e_{z,i} (t)\| \|\dot{e}_{z,i} (t)\|
\\
\leq & 2\|BK\|\|e_{z,i} (t)\| \Big\| \sum_{j=1}^N w_{ij}(\hat{z}_i (t) - \hat{z}_j (t))\Big\|\\
&+2 \|A\|\|e_{z,i} (t)\|^2 
\\
\leq &\Big\| \sum_{j=1}^N w_{ij} (\hat{z}_i (t) - \hat{z}_j (t))\Big\|^2\\
&+(2\|A\| + \|BK\|^2) \|e_{z,i} (t)\|^2,
\end{split}
\end{equation*}
where we used \eqref{eq-3-2} in the second inequality.
We then have
\begin{equation*}
\begin{split}
\|e_{z,i} (t)\|^2 & \leq \int_{t_k^i}^t e^{(2\|A\| + \|BK\|^2)(t-s)} \Big\| \sum_{j=1}^N w_{ij} (\hat{z}_i (s) - \hat{z}_j (s))\Big\|^2 ds
\\
&\leq e^{(2\|A\| + \|BK\|^2) (t-t_k^i)} \int_{t_k^i}^t \Big\|\sum_{j=1}^N w_{ij} (\hat{z}_i (s) - \hat{z}_j (s))\Big\|^2 ds.
\end{split}
\end{equation*}
We rewrite the above inequality by
\begin{equation*} 
\|e_{z,i} (s)\|^2  \leq e^{(2\|A\| + \|BK\|^2) (s-t_k^i)} \int_{t_k^i}^s \Big\|\sum_{j=1}^N w_{ij} (\hat{z}_i (r) - \hat{z}_j (r))\Big\|^2 dr.
\end{equation*}
Integrating this, we find 
\begin{equation*}
\begin{split}
&\int_{t_k^i}^t \|e_{z,i} (s)\|^2 ds\\
&  \leq \int_{t_k^i}^t e^{(2\|A\| + \|BK\|^2) (s-t_k^i)} \int_{t_k^i}^s \Big\| \sum_{j=1}^N w_{ij} (\hat{z}_i (r) - \hat{z}_j (r))\Big\|^2 dr  ds
\\
&\leq \int_{t_k^i}^t e^{(2\|A\| + \|BK\|^2) (s-t_k^i)} \int_{t_k^i}^t \Big\| \sum_{j=1}^N w_{ij} (\hat{z}_i (r) - \hat{z}_j (r))\Big\|^2 dr  ds
\\
& = \Big(\int_{t_k^i}^t e^{(2\|A\| + \|BK\|^2) (s-t_k^i)}ds \Big)\int_{t_k^i}^t \Big\| \sum_{j=1}^N w_{ij} (\hat{z}_i (r) - \hat{z}_j (r))\Big\|^2 dr.
\end{split}
\end{equation*}
From this and the triggering condition, we see that
\begin{equation*}
\int_{t_k^i}^{t_{k+1}^i} e^{(2\|A\| + \|BK\|^2)(s-t_k^i)} ds \geq \beta_i.
\end{equation*}
The proof is done.
\end{proof}
\section{Simulations}
In this section, we present a numerical simulation of the bipartite consensus with event-triggered communication given by \eqref{E-event triggered times}. We consider six agents and take the graph $L$ defined in \cite{YCCH} as 
\begin{equation*}
L =  \begin{pmatrix} 3 & -1 &2 & 0 &0 & 0 \\ -1 & 5 &4 &0 &0 &0 \\ 2 & 4& 8&-2 &0&0 \\ 0 &0& -2&6&-1&-3 \\ 0 &0&0& -1&1& 0 \\ 0&0&0&-3&0&3
    \end{pmatrix}
\end{equation*}
We take $A$ and $B$ in \cite{QFZG} given as
\begin{equation*}
A = \begin{pmatrix} 0 & 1 \\ -1 & 0 \end{pmatrix}\quad B = \begin{pmatrix} 0 & 1 \\ 1 &1 \end{pmatrix}.
\end{equation*}
The matrix $P$ satisfying \eqref{F-LMI of P} is computed as
\begin{equation*}
P=5\times \begin{pmatrix} 0.9862 & 0.0143\\ 0.0143& 0.9212\end{pmatrix}.
\end{equation*}
The initial coordinates of agent $1$ and agent $2$ are selected randomly by a uniform distribution of interval $[0,1]$ while those of other agents are  selected randomly by a uniform distribution of interval $[-1,0]$. We set the constant $\beta_k = 0.008$ for $1\leq k \leq 6$.\\
\begin{figure}[htbp]
\includegraphics[height=6cm, width=9cm]{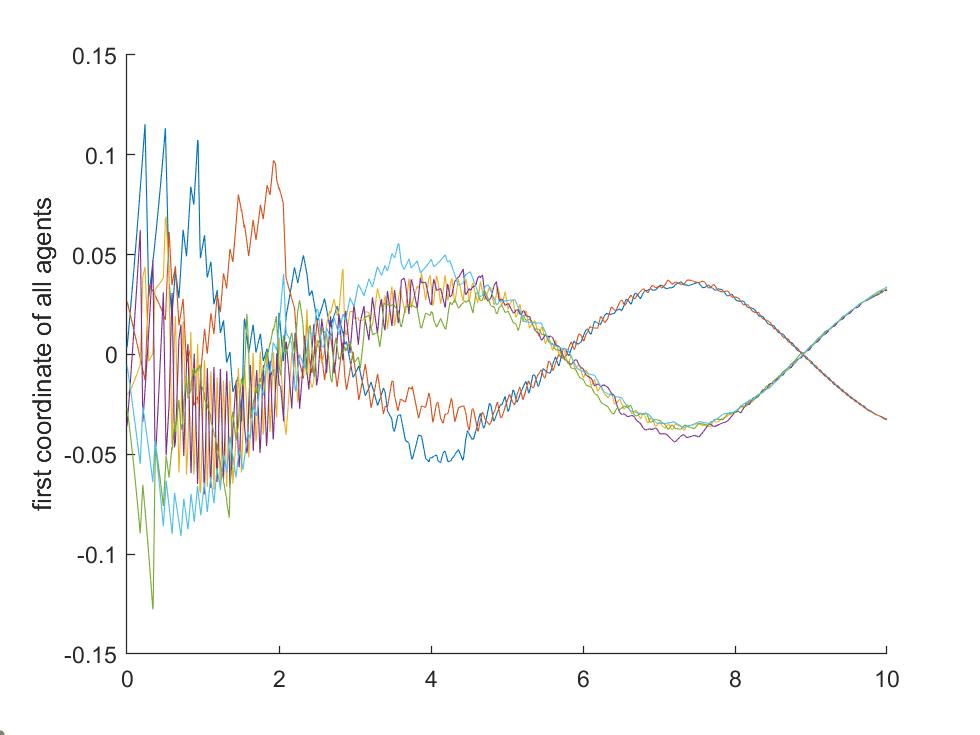}   
\vspace{-0.3cm}\caption{First coordinates of all agents}
\label{fig1}
\end{figure}
\begin{figure}[htbp]
\includegraphics[height=6cm, width=9cm]{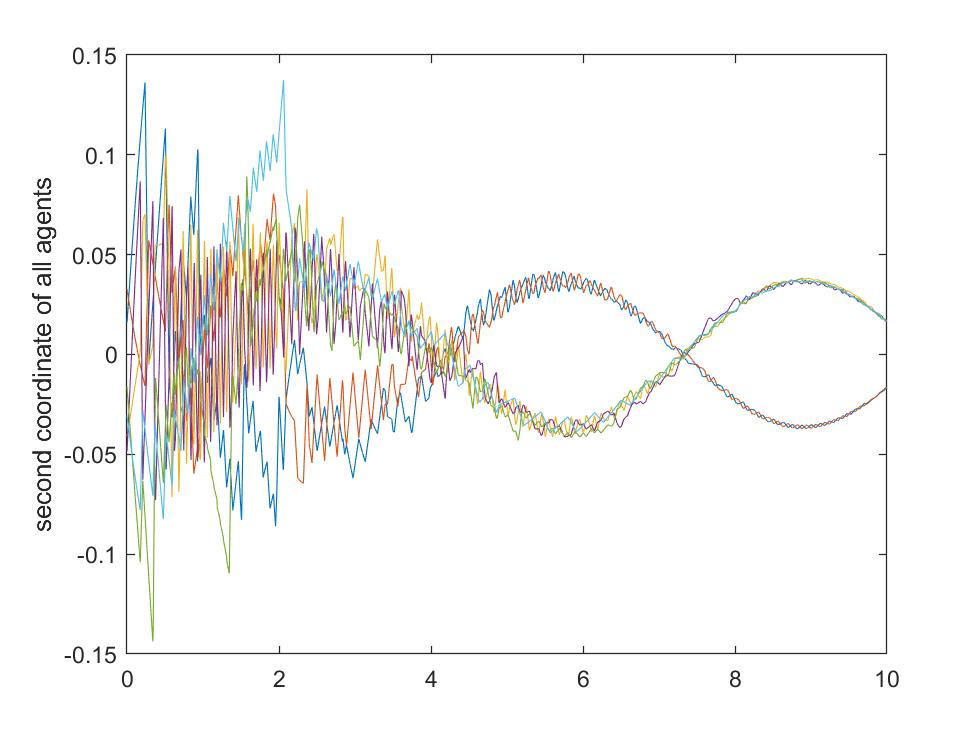}   
\vspace{-0.3cm}\caption{Second coordinates of all agents}
\label{fig2}
\end{figure}
In Figures \ref{fig1} and \ref{fig2} we see that agent 1 and agent 2 converge to a same trajectory and the other agents converge to another same trajectory.
For each $1\leq k$ we let $I_{ev}(k)$ be the minimum interval between two successive event-triggering times of agent $k$. The values of $I_{ev}(k)$ are given in Table \ref{tb3}.
\begin{figure}[htbp]
\includegraphics[height=6cm, width=9cm]{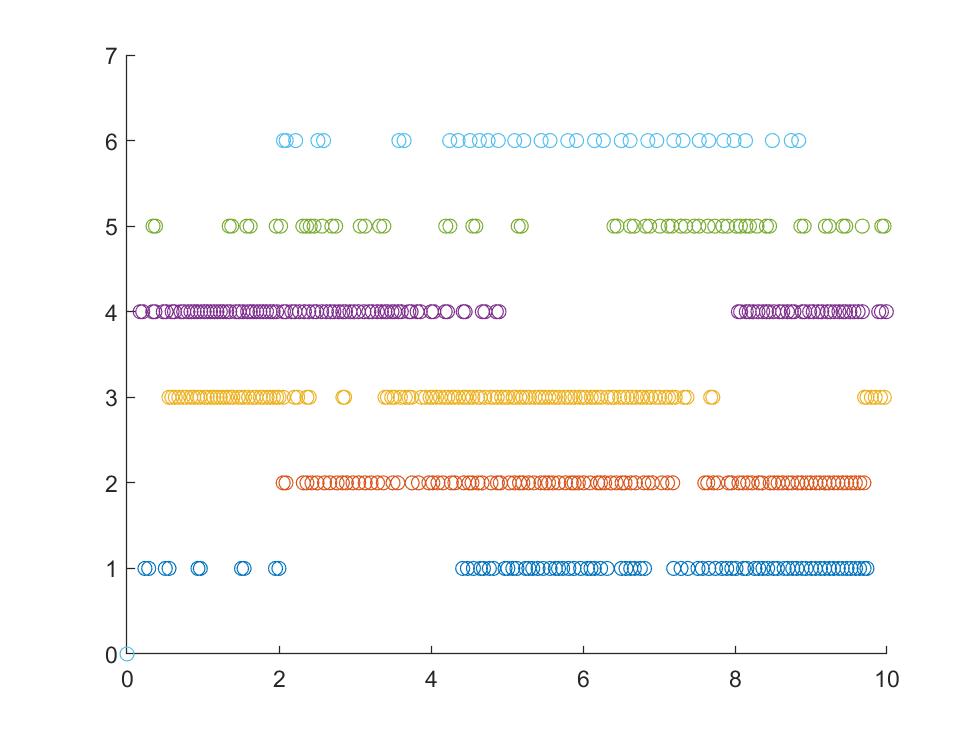}   \caption{Event-triggering instants}
\vspace{-0.3cm}
\label{fig3}
\end{figure}
\

Figure \ref{fig3} indicates the event-triggering times of each agent. The interval lengths between two successive event-triggering times of each agent are given in Figures \ref{fig4} and \ref{fig7}. The ranges of $y$-axis in the diagrams are fixed by $[0, 5* I_{ev}(k)]$ for each agent $1\leq k \leq 6$.
\begin{table}[htp]
\caption{Minimun interval between triggering times}
\begin{center}
\begin{tabular}{c|cccccc}
   $$  k&   1&    2&    3&  4&   5&  6\\[1.0mm]
\hline 
$I_{ev}(k) \,(s)$ &0.031  &   0.037&0.022 & 0.025	    &    0.031& 0.036     \\
\hline
\end{tabular}
\end{center}
\label{tb3}
\end{table}

\begin{figure}[htbp]
\includegraphics[height=6cm, width=9cm]{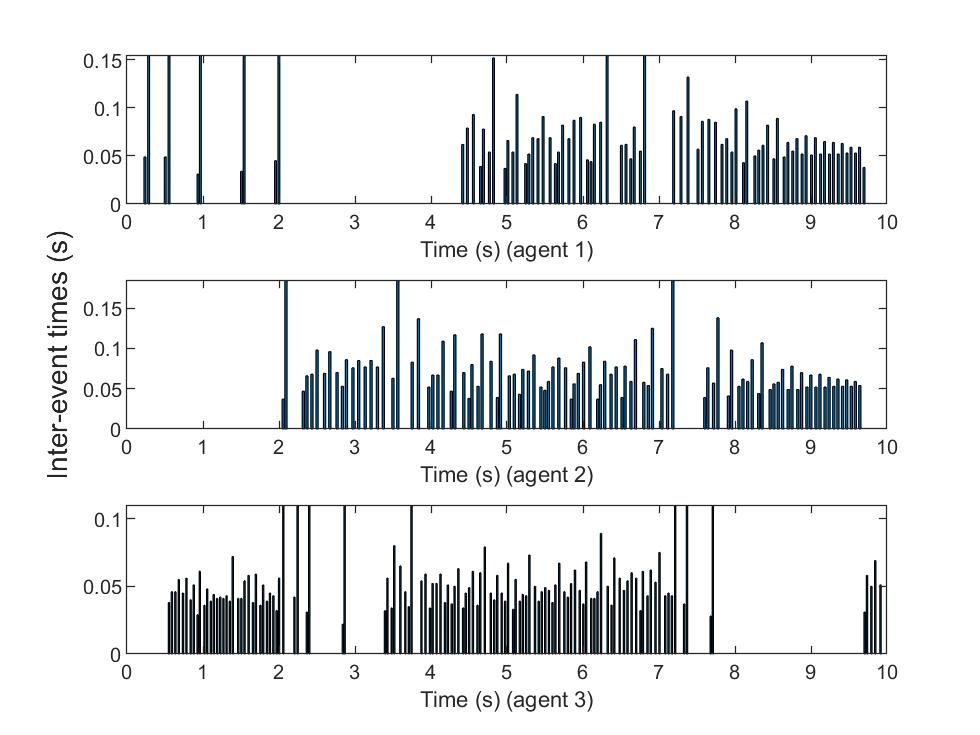}   \caption{Inter-event times of agents 1-3} 
\vspace{-0.3cm}
\label{fig4}
\end{figure}

\begin{figure}[htbp]
\includegraphics[height=6cm, width=9cm]{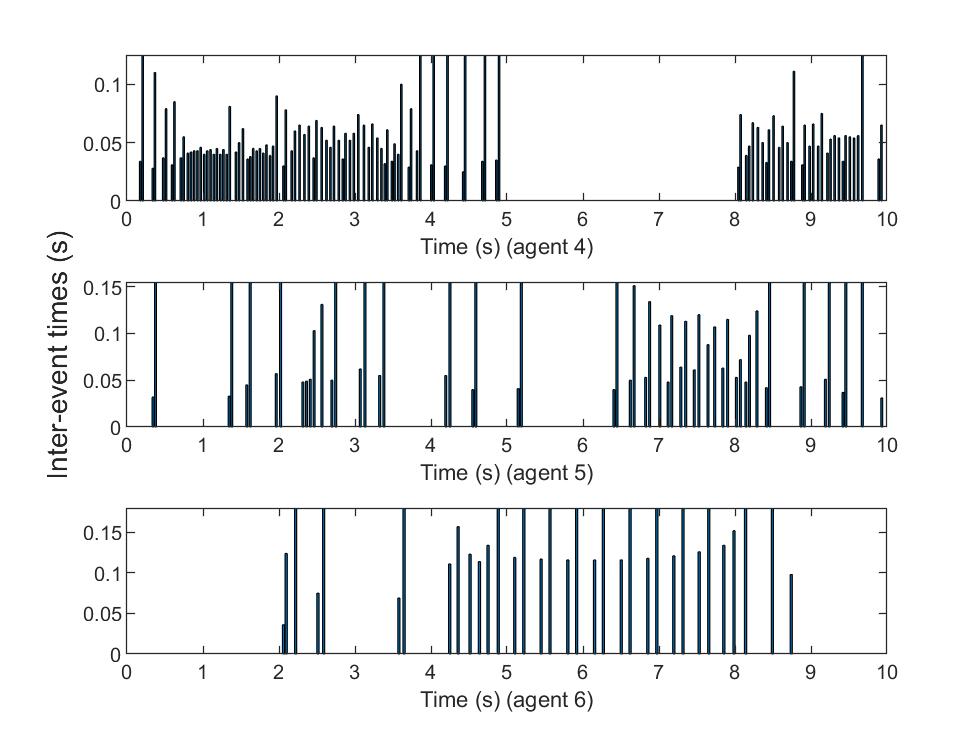}   \caption{Inter-event times of agents 4-6} 
\vspace{-0.3cm}
\label{fig7}
\end{figure}

\section{Conclusion}
In this work, we designed an integral based event-triggered controller for bipartite consensus of the general linear system. We rigorously proved that the bipartite consensus is achieved and there is a positive MIET for the controlled system. The numerical simulation was provided supporting the theoretical results.



\begin{thebibliography}{00}



\bibitem{CYRC} Y. Cao, W. Yu, W. Ren and G. Chen, "An Overview of Recent Progress in the Study of Distributed Multi-Agent Coordination," in IEEE Transactions on Industrial Informatics, vol. 9, no. 1, pp. 427-438, Feb. 2013.


\bibitem{KCM} S. Knorn, Z. Chen and R. H. Middleton, "Overview: Collective Control of Multiagent Systems," in IEEE Transactions on Control of Network Systems, vol. 3, no. 4, pp. 334-347, Dec. 2016.


\bibitem{QMSW} J. Qin, Q. Ma, Y. Shi and L. Wang, "Recent Advances in Consensus of Multi-Agent Systems: A Brief Survey," in IEEE Transactions on Industrial Electronics, vol. 64, no. 6, pp. 4972-4983, June 2017.







\bibitem{A} Altafini, Claudio, ``Consensus problems on networks with antagonistic interactions,'' IEEE Transactions on Automatic Control, Vol 58, No 4, 2013.
\bibitem{OM} R. Olfati-Saber and R. M. Murray, "Consensus problems in networks of agents with switching topology and time-delays," in IEEE Transactions on Automatic Control, vol. 49, no. 9, pp. 1520-1533, Sept. 2004.
\bibitem{QGY} J. Qin, H. Gao and C. Yu, "On Discrete-Time Convergence for General Linear Multi-Agent Systems Under Dynamic Topology," in IEEE Transactions on Automatic Control, vol. 59, no. 4, pp. 1054-1059, April 2014.
\bibitem{ZFHC} H. Zhang, G. Feng, H. Yan and Q. Chen, "Observer-Based Output Feedback Event-Triggered Control for Consensus of Multi-Agent Systems," in IEEE Transactions on Industrial Electronics, vol. 61, no. 9, pp. 4885-4894, Sept. 2014.
\bibitem{YCCH} H. Yu, X. Chen, T. Chen and F. Hao, ``Event-triggered bipartite consensus for multiagent systems: A Zeno-free analysis'', IEEE Transactions on Automatic Control, Vol 65, No 11, 2020.
\bibitem{HZ} J. Hu and W. X. Zheng, "Bipartite consensus for multi-agent systems on directed signed networks," 52nd IEEE Conference on Decision and Control, 2013, pp. 3451-3456.


\bibitem{MDJ} D. Meng, M. Du and Y. Jia, "Interval Bipartite Consensus of Networked Agents Associated With Signed Digraphs'', in IEEE Transactions on Automatic Control, vol. 61, no. 12, pp. 3755-3770, Dec. 2016.
\bibitem{VM} M. E. Valcher and P. Misra, "On the consensus and bipartite consensus in high-order multi-agent dynamical systems with antagonistic interactions", Systems $\&$ Control Letters Volume 66, April 2014, Pages 94-103.
\bibitem{ZC} H. Zhang and J. Chen, ``Bipartite consensus of general linear multi-agent
systems,'' in Proc. Amer. Control Conf., Portland, OR, USA, Jun. 2014,
pp. 808–-812.
\bibitem{Z} H. Zhang, ``Output feedback bipartite consensus and consensus of linear
multi-agent systems,'' in Proc. 54th IEEE Conf. Decis. Control, Osaka,
Japan, Dec. 2015, pp. 1731–-1735.
\bibitem{QFZG}J. Qin, W. Fu, W. X. Zheng and H. Gao, "On the Bipartite Consensus for Generic Linear Multiagent Systems With Input Saturation," in IEEE Transactions on Cybernetics, vol. 47, no. 8, pp. 1948-1958, Aug. 2017.
\bibitem{HWLG} J. Hu, Y. Wu, T. Li, and B. Ghosh, ``Consensus control
of general linear multi-agent systems with antagonistic interactions
and communication noises,'' IEEE Transactions
on Automatic Control, vol. 64, no. 5, pp. 2122-2127, May 2019.
\bibitem{DFJ} D. V. Dimarogonas, E. Frazzoli and K. H. Johansson, "Distributed Event-Triggered Control for Multi-Agent Systems," in IEEE Transactions on Automatic Control, vol. 57, no. 5, pp. 1291-1297, May 2012.
\bibitem{SDJ} G. S. Seyboth, D. V. Dimarogonas, K. H. Johansson, "Event-based broadcasting for multi-agent average consensus," in Automatica
Volume 49, Issue 1, January 2013, Pages 245-252.


\bibitem{SHAD} Z. Sun, N. Huang, B. D. O. Anderson and Z. Duan, "Event-Based Multiagent Consensus Control: Zeno-Free Triggering via $\mathcal{L}^p$ Signals," in IEEE Transactions on Cybernetics, vol. 50, no. 1, pp. 284-296, Jan. 2020.


\bibitem{WMXLSW}Y. Wu, X. Meng, L. Xie, R. Lu, H. Su, Z-G. Wu, "An input-based triggering approach to leader-following problems", Automatica
Volume 75, January 2017, Pages 221-228.
\bibitem{YYWJ} X. Yi, T. Yang, J. Wu and K. H. Johansson, "Distributed event-triggered control for global consensus of multi-agent systems with input saturation", Automatica Volume 100, February 2019, Pages 1-9.
\bibitem{LLHZ} H. Li, X. Liao, T. Huang and W. Zhu, "Event-Triggering Sampling Based Leader-Following Consensus in Second-Order Multi-Agent Systems," in IEEE Transactions on Automatic Control, vol. 60, no. 7, pp. 1998-2003, July 2015.
\bibitem{HLF} W. Hu, L. Liu and G. Feng, "Consensus of Linear Multi-Agent Systems by Distributed Event-Triggered Strategy," in IEEE Transactions on Cybernetics, vol. 46, no. 1, pp. 148-157, Jan. 2016.
\bibitem{ZJF} W. Zhu, Z. Jiang, G. Feng, ``Event-based consensus of multi-agent systems with general linear models,'' Automatica
Volume 50, Issue 2, February 2014, Pages 552-558.
\bibitem{LCHX} J. Li, X. Chen, F. Hao, and J. Xie, ``Event-triggered bipartite
consensus for multi-agent systems with antagonistic interactions,'' International Journal of Control, Automation
and Systems, vol. 17, no. 8, pp. 2046-2058, July 2019.
\bibitem{RSL} J. Ren, Q. Song, and G. Lu, ``Event-triggered bipartite
leader-following consensus of second-order nonlinear
multi-agent systems under signed digraph,'' Journal of the
Franklin Institute, vol. 356, no. 12, pp. 6591-6609, August
2019.
\bibitem{CZDZ} Y. Cai, H. Zhang, J. Duan, and J. Zhang, ``Distributed bipartite consensus
of linear multiagent systems based on event-triggered output feedback
control scheme,'' IEEE Trans. Syst., Man, Cybern., Syst., early access,
Jan. 27, 2020, doi: 10.1109/TSMC.2020.2964394.
\bibitem{ZCH} J. Zang, X. Chen, F. Hao, ``Observer based Event-triggered Bipartite Consensus of Linear Multi-agent systems,'' International Journal of Control, Automation and Systems volume 19, pages 1291–1301 (2021).
\bibitem{WLJH} X. Wang, K. Liu, Z. Ji, and S. Han, ‘‘Bipartite consensus of heterogeneous multiagent systems based on distributed event-triggered control,’’ Complexity, vol. 2020, pp. 1–14, Mar. 2020.
\bibitem{CZZH} Y. Cai, H. Zhang, J. Zhang, and Q. He, ``Distributed bipartite leader-following consensus of linear multiagent systems with input time delay based on eventtriggered transmission mechanism,'' ISA Transactions, November 2019.


\bibitem{CYH} X. Chen, H. Yu, and F. Hao, ``Prescribed-time event-triggered bipartite
consensus of multiagent systems,'' IEEE Trans. Cybern., early access,
Jul. 17, 2020, doi: 10.1109/TCYB.2020.3004572.
\bibitem{GCC} E. Garcia, Y. Cao, D. W. Casbeer, ``Decentralized event-triggered consensus with general linear dynamics,'' Automatica
Volume 50, Issue 10, October 2014, Pages 2633-2640.
\bibitem{CU} Y. Cheng, V. Ugrinovskii, ``Event-triggered leader-following tracking control for multivariable multi-agent systems,'' Automatica, 70, 204-210 (2016).

\bibitem{DL} O. Demir, J. Lunze, ``Synchronization of multi-agent systems with event-based communication,''
at-Automatisierungstechnik, 62(8), 535-546 (2014). 
\bibitem{MGM} S. H. Mousavi, M. Ghodrat, and H. J. Marquez, ``Integral-based event-triggered control scheme for a general class of non-linear systems,'' Control Theory $\&$ Applications, 13(9), 1982-1988, 2015.
\bibitem{GM} M. Ghodrat and H. J. Marquez, ``An integral based event triggered control scheme of distributed network systems,'' 2015 European Control Conference (ECC), 2015, pp. 1724-1729.
\bibitem{MZ} Y. Ma, J. Zhao, ``Distributed integral-based event-triggered scheme for cooperative output regulation of switched multi-agent systems,'' Information Sciences 457-458 (2018), 208-221.
\bibitem{WM} X. Wu, X. Mu, ``Observer-based leaderless and leader-following consensus for multiagent systems: A modified integral-type event-triggered design,'' Volume 30, Issue 9, June 2020, Pages 3753-3772.

\bibitem{ZLW} Z. Zhang, J. Lunze and L. Wang, ``Integral-based event-triggered control for multi-agent systems with general linear dynamics,'' International Journal of Control, 93:5, 1005-1014, 2020.
\bibitem{ZW} Z. Zhang and L. Wang, ``Distributed integral-type event-triggered synchronization of multiagent systems,'' Volume 28, Issue 14,September 2018, Pages 4175-4187. 



\bibitem{K} Khalil, \textit{Nonlinear systems (3rd edition)}. Upper Saddle River, New Jersey, Prentice Hall, 2002.


\bibitem{YRLC}D. Yang, W. Ren, X. Liu, W. Chen, ``Decentralized event-triggered consensus for linear multi-agent systems under general directed graphs,'' Automatica Volume 69, July 2016, Pages 242-249.
\bibitem{CL} B. Cheng and Z. Li, ``Fully Distributed Event-Triggered Protocols for Linear Multiagent Networks,'' in IEEE Transactions on Automatic Control, vol. 64, no. 4, pp. 1655-1662, April 2019.
\bibitem{LWDR} Z. Li, G. Wen, Z. Duan and W. Ren, "Designing Fully Distributed Consensus Protocols for Linear Multi-Agent Systems With Directed Graphs," in IEEE Transactions on Automatic Control, vol. 60, no. 4, pp. 1152-1157, April 2015.

\bibitem{YCCK} W. Yu, G. Chen, M. Cao and J. Kurths, "Second-Order Consensus for Multiagent Systems With Directed Topologies and Nonlinear Dynamics," in IEEE Transactions on Systems, Man, and Cybernetics, Part B (Cybernetics), vol. 40, no. 3, pp. 881-891, June 2010.
\bibitem{AS} A. D. Ames and S. Sastry, ``Characterization of Zeno behavior in hybrid systems using homological methods," Proceedings of the 2005, American Control Conference, 2005., 2005, pp. 1160-1165 vol. 2.


\bibitem{Zab} J. Zabczyk, \textit{ Mathematical control theory—an introduction}, 2nd edition. Birkh\"{a}user/Springer, Cham 2020.

\end{thebibliography}
\end{document}